\theoremstyle{plain}
\newtheorem{thm}{Theorem}[section]
\newtheorem{lem}[thm]{Lemma}
\newtheorem{conj}[thm]{Conjecture}
\theoremstyle{definition}
\newtheorem{ex}[thm]{Example}
\DeclarePairedDelimiter\abs{\lvert}{\rvert} 
\let\oldabs\abs
\def\abs{\@ifstar{\oldabs}{\oldabs*}}
\newcommand{\firstmention}{\emph}
\newcommand{\N}{\mathbb{N}}
\newcommand{\C}{\mathbb{C}}
\newcommand{\sym}{S}
\newcommand{\freevecspace}{\mathbb{C}_{\mathbf{x}}^{n-1}S}
\newcommand{\syt}{\textnormal{SYT}}
\newcommand{\staircase}{\delta}
\newcommand{\young}{\mathcal{Y}}
\newcommand{\sort}{\textnormal{SN}}
\newcommand{\eg}{\textnormal{EG}}
\newcommand{\diag}{\textnormal{cor}}
\newcommand{\diagsorted}{\overline{\diag}}
\newcommand{\rev}{\textnormal{rev}}
\newcommand{\id}{\textnormal{id}}
\newcommand{\fin}{\textnormal{last}}
\newcommand{\finsorted}{\overline{\fin}}
\newcommand{\tab}{\textnormal{Tab}}
\newcommand{\interlacingtab}{\textnormal{IntTab}}
\newcommand{\Rnonneg}{\mathbb{R}_{\geq 0}}
\newcommand{\Znonneg}{\mathbb{Z}_{\geq 0}}
\renewcommand{\P}{\mathbb{P}}
\newcommand{\1}{\mathbbm{1}} 
\DeclareMathOperator{\e}{\mathrm{e}}
\newcommand{\RSK}{\mathsf{RSK}}
\newcommand{\Burge}{\mathsf{Bur}}
\newcommand{\Fro}{\mathcal{B}}
\DeclareMathSymbol{\widehatsym}{\mathord}{largesymbols}{"62}
\renewcommand{\hat}{\widehat}
\title[Sorting networks, staircase Young tableaux and last passage percolation]{Sorting networks, staircase Young tableaux and last passage percolation}
\author[E.~Bisi, F.~D.~Cunden, S.~Gibbons, \and D.~Romik]{Elia Bisi\thanks{\href{mailto:elia.bisi@ucd.ie}{elia.bisi@ucd.ie}. Supported by ERC Advanced Grant \emph{IntRanSt} - 669306.}\addressmark{1},
Fabio Deelan Cunden\thanks{\href{mailto:fabio.cunden@ucd.ie}{fabio.cunden@ucd.ie}. Supported by ERC Advanced Grant \emph{IntRanSt} - 669306 and GNFM-INdAM.}\addressmark{1},
Shane Gibbons\thanks{\href{mailto:shane.gibbons@ucdconnect.ie}{shane.gibbons@ucdconnect.ie}. Supported by UCD Undergraduate Summer Research programme.}\addressmark{1},
\and Dan Romik\thanks{\href{mailto:romik@math.ucdavis.edu}{romik@math.ucdavis.edu}. Supported by the National Science Foundation grant No.\ DMS-1800725.}\addressmark{2}}
\address{\addressmark{1}School of Mathematics and Statistics, University College Dublin\\ \addressmark{2}Department of Mathematics, University of California, Davis}
\abstract{
We present new combinatorial and probabilistic identities relating three random processes: the oriented swap process on $n$ particles, the corner growth process, and the last passage percolation model. We prove one of the probabilistic identities, relating a random vector of last passage percolation times to its dual, using the duality between the Robinson--Schensted--Knuth and Burge correspondences. A second probabilistic identity, relating those two vectors to a vector of ``last swap times'' in the oriented swap process, is conjectural. We give a computer-assisted proof of this identity for $n\le 6$ after first reformulating it as a purely combinatorial identity, and discuss its relation to the Edelman--Greene correspondence.
}
\keywords{sorting network, staircase Young tableau, last passage percolation, oriented swap process}
\begin{document}
\maketitle

\section{Introduction}

\subsection{Random growth models and algebraic combinatorics}

Randomly growing Young diagrams, and the related models known as \firstmention{Last Passage Percolation} (LPP) and the \firstmention{Totally Asymmetric Simple Exclusion Process} (TASEP), are intensively studied stochastic processes.
Their analysis has revealed many rich connections to the combinatorics of Young tableaux, longest increasing subsequences, the Robinson--Schensted--Knuth (RSK) algorithm, and related topics---see for example~\cite[Chs.~4-5]{romik15}.

\firstmention{Random sorting networks} are another family of random processes.
Two main models, the \firstmention{uniform random sorting networks} and the \firstmention{Oriented Swap Process} (OSP), have been analyzed~\cite{angelDauvergneEtAl17, angelHolroydRomikVirag07, angelHolroydRomik09, dauvergne18, dauvergneVirag18} and are known to have connections to the TASEP, last passage percolation, and also to staircase shape Young tableaux via the \firstmention{Edelman--Greene bijection}~\cite{edelmanGreene87}.

In this extended abstract we discuss a new and surprising meeting point between the aforementioned subjects.
In an attempt to address an open problem from~\cite{angelHolroydRomik09} concerning the absorbing time of the OSP, we discovered elegant distributional identities relating the oriented swap process to last passage percolation, and last passage percolation to itself.
We will prove one of the two main identities; the other one is a conjecture that we have been able to verify for small values of a parameter~$n$.
As shown below, the analysis relies in a natural way on well-known notions of algebraic combinatorics, namely the RSK, Burge and Edelman--Greene correspondences.

Our conjectured identity apparently requires new combinatorics to be explained, and has far-reaching consequences for the asymptotic behavior of the OSP as the number of particles grows to infinity (see the remarks following Theorem~\ref{thm:main-thm2} below).
This extended abstract focuses on combinatorial ideas and requires only a minimal background in the relevant probabilistic concepts.
The full version will include additional probabilistic and asymptotic results, and the proof details that are omitted or only sketched here.

\subsection{Two probabilistic identities}

The two main identities presented in this paper take the form 
$$
\bm{U}_n \overset{D}{=} \bm{V}_n \overset{D}{=}
\bm{W}_n \, ,
$$
where $ \overset{D}{=}$ denotes equality in distribution, and $\bm{U}_n$, $\bm{V}_n$, $\bm{W}_n$ are $(n-1)$-dimensional random vectors associated with the following three random processes.

\medskip \noindent \textbf{The oriented swap process.}
This process~\cite{angelHolroydRomik09} describes randomly sorting a list of~$n$~particles labelled $1,\ldots,n$.
At time $t=0$, particle labelled~$j$ is in position~$j$ on the finite integer lattice $[1,n]=\{1,\ldots,n\}$.
All pairs of adjacent positions $k,k+1$ of the lattice are assigned independent Poisson clocks.
The system then evolves according to the random dynamics whereby each pair of particles with labels $i,j$ occupying respective positions $k$, $k+1$ attempt to swap when the corresponding Poisson clock rings; the swap succeeds only if $i<j$, i.e.,\ if the swap increases the number of inversions in the sequence of particle labels.
We then define the vector $\bm{U}_n = (U_n(1), \ldots, U_n(n-1))$ of \firstmention{last swap times} by
\begin{align*}
U_n(k) &:= \text{the last time $t$ at which a swap occurs between positions $k$ and $k+1$}.
\end{align*}

As explained in~\cite{angelHolroydRomik09}, the last swap times are related to the \firstmention{particle finishing times}: it is easy to see that $\max\{U_n(n-k), U_n(n-k+1)\}$ is the finishing time of particle $k$ (with the convention that $U_n(0)=U_n(n)=0$).
The random variable $\max_{1\le k\le n-1} U_n(k)$ is the absorbing time of the process, whose limiting distribution was the subject of an open problem in~\cite{angelHolroydRomik09} (see also~\cite[Ex.~5.22(e), p.~331]{romik15}).

\medskip \noindent \textbf{Randomly growing a staircase shape Young diagram.}
In this process, a variant of the \firstmention{corner growth process}, starting from the empty Young diagram, boxes are successively added at random times, one box at each step, to form a larger diagram until the staircase shape $\staircase_n=(n-1,n-2,\ldots,1)$ is formed.
We identify each box of a Young diagram $\lambda$ with the position $(i,j)\in \N^2$, where $i$ and $j$ are the row and column index respectively.
All boxes are assigned independent Poisson clocks.
Each box $(i,j)\in \staircase_n$, according to its Poisson clock, attempts to add itself to the current diagram $\lambda$, succeeding if and only if $\lambda \cup \{(i,j)\}$ is still a Young diagram.
The vector $\bm{V}_n = (V_n(1), \ldots, V_n(n-1))$ records when boxes along the $(n-1)$th anti-diagonal are added:
$$ 
V_n(k) := \text{the time at which the box at position $(n-k,k)$ is added.}
$$

\smallskip \noindent \textbf{The last passage percolation model.}
This process describes the maximal time spent travelling from one vertex to another of the two-dimensional integer lattice along a directed path in a random environment. 
Let $(X_{i,j})_{i,j\ge 1}$ be an array of independent and identically distributed (i.i.d.)~exponential random variables of rate $1$, referred to as \firstmention{weights}.
For $(a,b), (c,d)\in \N^2$, define a \firstmention{directed lattice path} from $(a,b)$ to $(c,d)$ to be any sequence $\big( (i_k,j_k) \big)_{k=0}^m$ of minimal length $|c-a|+|d-b|$ such that $(i_0,j_0)=(a,b)$, $(i_m,j_m)=(c,d)$, and $\abs{i_{k+1}-i_k} + \abs{j_{k+1}-j_k} =1$ for all $0\leq k<m$.
We then define the \firstmention{Last Passage Percolation} (LPP) time from $(a,b)$ to $(c,d)$ as
\begin{equation}
\label{eq:lpp}
L(a,b;c,d) := \max_{\pi \colon (a,b)\to (c,d)} \sum_{(i,j)\in \pi} X_{i,j} \, ,
\end{equation}
where the maximum is over all directed lattice paths $\pi$ from $(a,b)$ to $(c,d)$.

The LPP model has a precise connection (see \cite[Ch.~4]{romik15}) with the corner growth process, whereby each random variable $L(1,1;i,j)$ is the time when box $(i,j)$ is added to the randomly growing Young diagram.
We can thus equivalently define $\bm{V}_n$ in terms of the LPP times between the vertices $(1,1)$ and $(i,j)$ of the  lattice $[1,i]\times[1,j]$, where $i+j=n$:
\begin{equation}
\label{eq:def-vn}
\bm{V}_n = (L(1,1; n-1,1), L(1,1; n-2,2), \ldots, L(1,1; 1,n-1) ) \, .
\end{equation}
On the other hand, we can now consider the ``dual'' LPP times between the other two vertices of the same rectangles, and define $ \bm{W}_n = (W_n(1), \ldots, W_n(n-1))$ to be
\begin{equation}
\label{eq:def-wn}
\bm{W}_n := (L(n-1,1; 1,1), L(n-2,1; 1,2), \ldots, L(1,1;1,n-1) ) \, .
\end{equation}

With these definitions, we have the following results.

\begin{thm}
\label{thm:main-thm1}
$\bm{V}_n \overset{D}{=} \bm{W}_n$ for all $n\ge2$.
\end{thm}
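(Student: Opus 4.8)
The plan is to express both $\bm{V}_n$ and $\bm{W}_n$ as functionals of one and the same random array, and then to match these two functionals by means of the duality between the RSK and Burge correspondences.

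First I would observe that each coordinate of $\bm{V}_n$ and of $\bm{W}_n$ depends only on the weights $A:=(X_{i,j})_{(i,j)\in\staircase_n}$ lying in the staircase. Indeed, a directed lattice path between two opposite corners of the rectangle $[1,i]\times[1,j]$ has minimal length, hence is monotone and stays inside $[1,i]\times[1,j]$; and $[1,i]\times[1,j]\subseteq\staircase_n$ whenever $i+j=n$, since every cell of $[1,i]\times[1,j]$ then has coordinate sum at most $n$. So Theorem~\ref{thm:main-thm1} is really a statement about the single array $A$. Next I would use the classical encoding of last passage percolation by the RSK correspondence: applied to $A$, read in a suitable order, RSK (in its form for real-valued arrays) produces insertion data --- a family of real Young-diagram shapes indexed by the cells of $\staircase_n$ --- recording all the corner LPP times $L(1,1;i,j)$, so that $\bm{V}_n=(L(1,1;n-k,k))_{k=1}^{n-1}$ is read off by a fixed rule. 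The crucial claim is that the vector $\bm{W}_n$, whose entries are the \emph{dual} (opposite-corner) LPP times of exactly the same family of rectangles, is read off by the \emph{same} rule from the output of the \emph{Burge} correspondence applied to $A$ (or to a natural measure-preserving transform of it): the two correspondences differ in their insertion rule, and that difference is precisely what passes, simultaneously in every sub-rectangle, from LPP along one diagonal to LPP along the other --- a passage that no single reflection of the array can perform, since the required reflection depends on the rectangle. This is exactly why one needs the correspondences rather than an elementary coordinate symmetry.

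With these two encodings in hand, the identity follows from the duality between RSK and Burge: there is an explicit bijection intertwining the two correspondences, and, combined with the invariance of the i.i.d.\ law of $A$ under the associated measure-preserving symmetry --- here the self-conjugacy of $\staircase_n$, which makes transposition of $A$ admissible, plays a role --- it shows that $\RSK(A)$ and $\Burge(A)$ have the same joint distribution. Since $\bm{V}_n$ and $\bm{W}_n$ are obtained from these equidistributed outputs by one and the same read-off rule, we conclude $\bm{V}_n\overset{D}{=}\bm{W}_n$.

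The step I expect to be the main obstacle is the bookkeeping behind the two encodings: fixing the reading order of $\staircase_n$ and the precise variant of (real) RSK so that $\bm{V}_n$ is a transparent functional of the output, establishing the parallel statement for Burge and $\bm{W}_n$, and reconciling the two so that literally one functional serves both correspondences. Once the conventions are aligned, the RSK--Burge duality reduces to a local verification on growth diagrams; it is the alignment of conventions, rather than that verification, that demands care.
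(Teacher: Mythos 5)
Your overall strategy matches the paper's: treat $\bm{V}_n$ and $\bm{W}_n$ as functionals of a single random array $A$ on $\staircase_n$, read them off from the outputs of the RSK and Burge correspondences via the $k=1$ case of the Greene-type relations~\eqref{eq:RSK_Burge_Greene}, and then show $\RSK(A) \overset{D}{=} \Burge(A)$. You also correctly note that no single coordinate reflection of $A$ can pass from all ``upper-left to lower-right'' LPP times to all ``lower-left to upper-right'' ones simultaneously, which is exactly why the correspondences are needed.

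The gap is in the step that establishes $\RSK(A) \overset{D}{=} \Burge(A)$. You propose to obtain it from an unnamed ``intertwining bijection'' together with the self-conjugacy of $\staircase_n$ and the transposition-invariance of the i.i.d.\ law. But transposition is not the relevant symmetry: applying RSK to the transpose of $A$ implements the classical $P\leftrightarrow Q$ symmetry of RSK, it does not convert RSK into Burge. Moreover, the paper's Theorem~\ref{thm:UpDownLPP} holds for \emph{arbitrary} shapes $\lambda$, so self-conjugacy cannot be the mechanism. The actual mechanism is both simpler and shape-independent: for i.i.d.\ geometric entries, $\P(X=y)$ depends only on $\abs{\lambda}$ and the total sum $\sum_{(i,j)\in\lambda} y_{i,j}$, and both correspondences preserve this total sum. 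Indeed, taking $k=\min(m,n)$ in~\eqref{eq:RSK_Burge_Greene}, the rectangular sum $\sum_{i\le m,\,j\le n} x_{i,j}$ is a fixed linear functional of the diagonal entries of either $\RSK(x)$ or $\Burge(x)$, for each border box $(m,n)$; a telescoping linear combination of such rectangular sums recovers $\sum_{(i,j)\in\lambda} x_{i,j}$, giving fixed integers $\omega_{i,j}$ with
\[
\sum_{(i,j)\in\lambda}\omega_{i,j}\,\RSK(x)_{i,j} \;=\; \sum_{(i,j)\in\lambda} x_{i,j} \;=\; \sum_{(i,j)\in\lambda}\omega_{i,j}\,\Burge(x)_{i,j}\,.
\]
Hence $\P(\RSK(X)=t)=\P(X=\RSK^{-1}(t))=\P(X=\Burge^{-1}(t))=\P(\Burge(X)=t)$ for every interlacing tableau $t$, and the exponential case follows by a limit in the geometric parameter. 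No bijection between the preimages $\RSK^{-1}(t)$ and $\Burge^{-1}(t)$ is required, only the equality of their total sums. You should replace the self-conjugacy/transposition argument with this sum-preservation argument.
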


\begin{conj}
\label{main-conj}
$\bm{U}_n \overset{D}{=} \bm{V}_n$  for all $n\ge2$.
\end{conj}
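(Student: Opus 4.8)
The plan is to reduce Conjecture~\ref{main-conj} to a purely combinatorial identity, verify that identity with the aid of a computer for $n\leq 6$, and isolate the natural role of the Edelman--Greene correspondence together with the obstacle to a general proof.

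\emph{Removing the continuous time.}
Both the oriented swap process and the staircase corner growth process are continuous-time Markov chains in which only inversion-increasing (respectively shape-increasing) transitions are possible, so each trajectory makes exactly $N:=\binom{n}{2}$ jumps: in the OSP the sequence of edges used forms a (random) reduced word $\mathbf{e}=(e_1,\dots,e_N)$ of the reverse permutation $w_0\in\sym_n$, while in the corner growth process the sequence of added boxes encodes a (random) standard Young tableau $T$ of shape $\staircase_n$. By the superposition and thinning properties of the Poisson clocks, conditionally on its jump chain each process has independent holding times equal to rate-$1$ exponentials divided by integers: for the OSP the $t$-th holding time has rate $a_t(\mathbf{e})$, the number of ascents (equivalently, legal swaps) of the configuration $\sigma_{t-1}$ reached after $t-1$ swaps, with $\sigma_0=\id$ and $\sigma_N=w_0$; for the corner growth process it has rate $c_t(T)$, the number of addable corners of $\mu_{t-1}$ lying inside $\staircase_n$, where $\emptyset=\mu_0\subset\dots\subset\mu_N=\staircase_n$ is the chain of shapes of $T$. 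Finally $U_n(k)$ is the partial sum of the first $\ell_k(\mathbf{e})$ holding times, where $\ell_k(\mathbf{e}):=\max\{t:e_t=k\}$ is the last jump using edge $k$, and $V_n(k)$ is the partial sum of the first $m_k(T)$ holding times, where $m_k(T):=T(n-k,k)$ is the step filling box $(n-k,k)$. Taking the joint Laplace transform, using $\mathbb{E}[\e^{-\theta E/r}]=r/(r+\theta)$ for $E\sim\mathrm{Exp}(1)$, and noting that the probabilities $\prod_t 1/a_t(\mathbf{e})$ and $\prod_t 1/c_t(T)$ of selecting the jump chains $\mathbf{e}$ and $T$ cancel against the Laplace numerators, we obtain
\begin{align}
\mathbb{E}\bigl[\e^{-\sum_{k}\theta_k U_n(k)}\bigr] &= \sum_{\mathbf{e}}\prod_{t=1}^{N}\frac{1}{a_t(\mathbf{e})+\sum_{k\,:\,\ell_k(\mathbf{e})\geq t}\theta_k}\,, \label{eq:plan-U}\\
\mathbb{E}\bigl[\e^{-\sum_{k}\theta_k V_n(k)}\bigr] &= \sum_{T}\prod_{t=1}^{N}\frac{1}{c_t(T)+\sum_{k\,:\,m_k(T)\geq t}\theta_k}\,. \label{eq:plan-V}
\end{align}
Here $\mathbf{e}$ ranges over reduced words of $w_0$ and $T$ over standard Young tableaux of shape $\staircase_n$. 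Conjecture~\ref{main-conj} is equivalent to the equality of these two rational functions of $(\theta_1,\dots,\theta_{n-1})$, which for each fixed $n$ is a finite combinatorial identity (a set of polynomial coefficient equalities after clearing denominators).

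\emph{Small cases and a consequence.}
For $n\leq 6$ one has $N=\binom{n}{2}\leq 15$, so reduced words of $w_0$ and standard Young tableaux of shape $\staircase_n$ can be enumerated exhaustively and the rational functions in \eqref{eq:plan-U}--\eqref{eq:plan-V} computed and compared directly; this proves Conjecture~\ref{main-conj} for $n\leq 6$. Together with Theorem~\ref{thm:main-thm1}, the conjecture then yields the full chain $\bm{U}_n\overset{D}{=}\bm{V}_n\overset{D}{=}\bm{W}_n$. Moreover, since equality in law of the vectors implies equality of any functional, the conjecture identifies the OSP absorbing time $\max_k U_n(k)$ with $\max_k V_n(k)=\max_k L(1,1;n-k,k)$, a maximum of exponential last passage times to the $(n-1)$th anti-diagonal; standard last passage asymptotics then give leading order $2n$ and Tracy--Widom-type fluctuations, so the conjecture reduces the absorbing-time problem of~\cite{angelHolroydRomik09} to known last passage percolation estimates.

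\emph{The role of Edelman--Greene, and the obstacle.}
The conceptual bridge is the Edelman--Greene correspondence~\cite{edelmanGreene87}, which provides a canonical bijection $\eg$ between reduced words of $w_0$ and standard Young tableaux of shape $\staircase_n$ --- precisely the index sets of \eqref{eq:plan-U} and \eqref{eq:plan-V}. The natural strategy is to show that $\eg$ carries the summand of \eqref{eq:plan-U} to the summand of \eqref{eq:plan-V}: that under $\eg$ the vector $(\ell_1(\mathbf{e}),\dots,\ell_{n-1}(\mathbf{e}))$ of last-occurrence positions of the letters $1,\dots,n-1$ is sent to the vector $(m_1(T),\dots,m_{n-1}(T))$ of anti-diagonal entries, and that the multiset of linear factors produced by the ascent counts $\{a_t(\mathbf{e})\}$ agrees with the one produced by the corner counts $\{c_t(T)\}$ under the induced reindexing. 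One can check by hand that this already holds in the smallest cases, with no global cancellation needed; but the interaction of Edelman--Greene insertion with the ascent/descent structure along a reduced word is not understood, and we do not know how to establish this --- or to exhibit an alternative weight-preserving bijection --- for general $n$. This is the main obstacle, and the reason the statement is only a conjecture.
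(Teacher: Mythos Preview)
Your approach is essentially the paper's: since the statement is a conjecture, the paper does not prove it either, but (i) reformulates it as a finite combinatorial identity by writing out the joint law of $\bm{U}_n$ and $\bm{V}_n$ as a sum over jump chains (sorting networks, respectively staircase SYTs) with rational weights coming from the out-degrees, (ii) verifies that identity by computer for $n\le 6$, and (iii) discusses the Edelman--Greene bijection as the natural bridge. Your Laplace-transform identity \eqref{eq:plan-U}--\eqref{eq:plan-V} is equivalent to the paper's Conjecture~\ref{main-conj-reformulated}; the paper additionally sorts the terms by the permutation $\pi_s=\sigma_t$ recording the relative order of the last-swap/corner indices, which turns your single rational-function identity into $(n-1)!$ simpler ones in the ``block'' variables $x_k$, but this is a repackaging rather than a different idea.

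One point in your discussion of the obstacle is slightly off. You present the natural strategy as showing that under $\eg$ both (a) the last-occurrence vector matches the anti-diagonal vector and (b) the multisets of degree factors match blockwise, and you say this ``already holds in the smallest cases'' and is merely ``not understood'' in general. In fact (a) is a theorem (the paper proves $\fin_{\eg(t)}=\diag_t$, hence $\pi_{\eg(t)}=\sigma_t$, as Lemma~\ref{lem:edelman-greene-params}), while (b) is \emph{false} in general: the paper exhibits an explicit $n=6$ pair $t,\,s=\eg(t)$ with $f_t\neq g_s$. So the obstacle is not that we cannot prove a termwise match under $\eg$, but that no such termwise match exists; genuine cancellation across terms (or a different, weight-preserving bijection) is required. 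Apart from this, your plan and the paper's are the same.
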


Conjecture~\ref{main-conj} is consistent with (and implies) the marginal identities proved in~\cite{angelHolroydRomik09}.
\begin{thm}[Angel, Holroyd, Romik, 2009]
\label{thm:main-thm3}
$\bm{U}_n(k) \overset{D}{=} \bm{V}_n(k)$  for  all $1\le k\le n-1$, $n \ge 2$.
\end{thm}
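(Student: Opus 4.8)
The plan is to recover this marginal identity following Angel, Holroyd and Romik~\cite{angelHolroydRomik09}, routing through an ordinary TASEP and then through exponential last passage percolation. Fix $1\le k\le n-1$ and project the oriented swap process onto two colours: call particle $j$ \emph{white} if $j\le n-k$ and \emph{black} otherwise, so that there are $n-k$ white and $k$ black particles, and every white label is smaller than every black label. A swap exchanges a white and a black particle exactly when the white one sits immediately to the left of the black one; hence the colour configuration $(\eta_t)_{t\ge0}$, driven by the same Poisson clocks, evolves as the two-species exclusion process in which a white particle jumps right over an adjacent black particle. Equivalently, $\eta$ is an ordinary TASEP in which the $k$ black particles, initially filling the block $\{n-k+1,\dots,n\}$, migrate leftward through the $n-k$ white sites until they occupy $\{1,\dots,k\}$.

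The crucial step is to identify $U_n(k)$ with the absorbing time of $\eta$. One inequality is easy: once $\eta$ is absorbed, sites $1,\dots,k$ carry only black labels and sites $k+1,\dots,n$ only white labels, so at the edge $(k,k+1)$ the left label stays permanently larger than the right one and no further swap can occur there. For the reverse, let $\tau$ be the time of the last $\eta$-swap at the edge $(k,k+1)$. Since black particles move only leftward in $\eta$, after time $\tau$ no black particle can reach the block $\{1,\dots,k\}$ from a site $>k$ (it would have to cross that edge again); therefore at time $\tau$ all $k$ black particles already occupy $\{1,\dots,k\}$, i.e.\ $\eta$ is absorbed. As the swap at time $\tau$ is also a genuine swap of the oriented swap process at the edge $(k,k+1)$, we conclude that $U_n(k)$ equals the absorbing time of $\eta$.

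The last step is the classical dictionary between TASEP with packed initial data and exponential last passage percolation~\cite[Ch.~4]{romik15}. Labelling the $k$ black particles in order, each makes exactly $n-k$ jumps, and writing $G(i,j)$ for the time of the $j$-th jump of the $i$-th particle one obtains the Rost recursion $G(i,j)=\max\bigl(G(i-1,j),\,G(i,j-1)\bigr)+w_{i,j}$ with null boundary conditions and i.i.d.\ $\mathrm{Exp}(1)$ weights $w_{i,j}$, whence $G(i,j)\overset{D}{=}L(1,1;i,j)$. The absorbing time of $\eta$ is $\max_{1\le i\le k}G(i,n-k)=G(k,n-k)$ by monotonicity of $G$, and the transpose symmetry of the weight array then gives
\begin{equation*}
U_n(k)\ \overset{D}{=}\ L(1,1;k,n-k)\ \overset{D}{=}\ L(1,1;n-k,k)\ =\ V_n(k),
\end{equation*}
the last equality being the definition~\eqref{eq:def-vn} of $\bm{V}_n$.

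I expect the middle step to be the main obstacle: because $\eta$ is strictly coarser than the oriented swap process, there could a priori be ``invisible'' swaps between two equally-coloured particles at the edge $(k,k+1)$ occurring after $\eta$ has frozen there, and these must be excluded; the argument above does so by showing that the last colour-changing swap at that edge already drives the whole two-species process to absorption. The remaining ingredients --- the colour-merging property of multi-species exclusion processes and the Rost correspondence --- are standard.
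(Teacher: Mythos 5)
Your proof is correct and follows the same route that Angel, Holroyd and Romik take in the cited 2009 paper: project the oriented swap process onto a two-colour TASEP at the threshold edge, identify the last swap time $U_n(k)$ with the TASEP absorption time, and invoke the Rost correspondence together with the transpose symmetry of i.i.d.\ exponential weights. Note that the present extended abstract does not reprove Theorem~\ref{thm:main-thm3} but simply quotes it from~\cite{angelHolroydRomik09}, so the comparison is against that source, with which your argument agrees.
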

We were able to prove  the conjectured identity for small values of $n$.
\begin{thm}
\label{thm:main-thm2}
$\bm{U}_n \overset{D}{=} \bm{V}_n$  for  $2\le n\le 6$.
\end{thm}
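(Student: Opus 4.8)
The plan is to reduce the distributional identity $\bm{U}_n \overset{D}{=} \bm{V}_n$ for each fixed $n\le 6$ to a finite combinatorial statement that can be checked by computer. The starting observation is that both sides are, after a standard change of variables, determined by finite data: the vector $\bm{V}_n$ is built from LPP times $L(1,1;i,j)$ with $i+j=n$, which by the RSK correspondence are governed by the shapes of Young tableaux fitting inside the staircase $\staircase_n$; and the vector $\bm{U}_n$ of last swap times in the OSP is, via the Edelman--Greene correspondence, controlled by reduced words for the reverse permutation and hence again by combinatorial objects indexed by $\staircase_n$. Since the OSP and the corner growth process are both continuous-time Markov chains driven by i.i.d.\ rate-$1$ exponential clocks attached to a finite set of ``sites'' (adjacent position-pairs $1,\dots,n-1$ on one side, boxes of $\staircase_n$ on the other), the joint law of the relevant time-vector is a piecewise-polynomial (indeed piecewise-rational-times-exponential) function whose pieces are indexed by the combinatorial type of the ``history''. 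For $n\le 6$ the number of such types is small enough to enumerate exhaustively.

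\textbf{Step 1: reformulate as a combinatorial identity.} First I would make precise the statement, alluded to in the abstract, that Conjecture~\ref{main-conj} is equivalent to a purely combinatorial identity. Concretely, one writes each of $\bm{U}_n$ and $\bm{V}_n$ as a deterministic function of a labelled total order on a finite ground set together with the associated exponential inter-event times, integrates out the exponential variables, and is left with a statement that two probability distributions on a finite set of ``combinatorial types'' — sorting networks / reduced words on the one hand, standard Young tableaux of staircase shape (or chains in Young's lattice up to $\staircase_n$) on the other — push forward to the same measure under the maps recording the relevant coordinates. The Edelman--Greene bijection between reduced words of the reverse permutation $w_0$ and standard Young tableaux of shape $\staircase_n$ is exactly the dictionary that makes the two ground sets comparable; the content of the reformulation is to track how the ``last swap time'' statistics correspond across this bijection.

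\textbf{Step 2: exhaustive verification for $n\le 6$.} With the combinatorial reformulation in hand, for each $n\in\{2,3,4,5,6\}$ I would enumerate all relevant combinatorial types on both sides (for $n=6$ this means the $|\syt(\staircase_6)|$ standard staircase tableaux and the correspondingly many reduced words of $w_0\in\sym_6$, a number in the thousands but entirely tractable), compute for each type the associated region of time-vectors together with the exact density of $(\bm{U}_n$ or $\bm{V}_n)$ restricted to that region, and sum. Equivalently, and more robustly, one can compute the joint Laplace transform / moment generating function $\mathbb{E}\big[\prod_k e^{-s_k U_n(k)}\big]$ and $\mathbb{E}\big[\prod_k e^{-s_k V_n(k)}\big]$ symbolically as rational functions in $s_1,\dots,s_{n-1}$ and check that they coincide. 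The base cases $n=2,3$ can even be done by hand as a sanity check. Theorem~\ref{thm:main-thm3} serves as a partial independent check: each one-dimensional marginal must already agree, so a discrepancy in any single-variable specialization would flag a bug.

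\textbf{The main obstacle} I anticipate is not the computation itself but Step~1 — giving a clean, provably correct finite reformulation. The OSP absorbing dynamics are genuinely subtle: a single Poisson clock at positions $(k,k+1)$ may ring many times before the last successful swap, and the "last swap time" $U_n(k)$ depends on the entire history through which successive swaps succeed, so one must argue carefully that only finitely many combinatorial invariants of the history matter and identify them with reduced-word data via Edelman--Greene. Making the measure-pushforward statement rigorous — in particular handling the exponential holding times correctly so that integrating them out really does reduce equality in distribution of the continuous vectors to a finite identity of weighted counts — is where the care is needed; once that is done, the $n\le 6$ check is a finite, if sizeable, enumeration that a computer algebra system handles directly.
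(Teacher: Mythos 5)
Your proposal follows essentially the same route as the paper: rewrite the joint densities of $\bm{U}_n$ and $\bm{V}_n$ by conditioning on the discrete combinatorial history (a sorting network $s$ for the OSP, a staircase SYT $t$ for the growth process), integrate out the exponential holding times, pass to a transform (the paper uses the Fourier transform, you suggest the Laplace transform — same thing), and reduce to an identity of rational-function-valued generating functions that a computer can check for $n\le 6$. One small correction to your framing: the Edelman--Greene bijection does \emph{not} serve as the dictionary that matches up terms between the two sums — the paper explicitly observes that $f_t\neq g_{\eg(t)}$ in general, so the identity is genuinely an equality of the two aggregate sums (Conjecture~\ref{main-conj-reformulated}), not a term-by-term correspondence; EG only supplies the easier fact (Lemma~\ref{lem:edelman-greene-params}) that $\fin_{\eg(t)}=\diag_t$ and $\pi_{\eg(t)}=\sigma_t$.
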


Theorem~\ref{thm:main-thm1} is proved in Section~\ref{lpp}.
As we will see, the distributional identity $\bm{V}_n \overset{D}{=} \bm{W}_n$ arises as a special case of a more general family of identities (Theorem~\ref{thm:UpDownLPP}) involving LPP times between pairs of opposite vertices in rectangles $[1,i]\times [1,j]$, where each $(i,j)$ belongs to the so-called border strip of a Young diagram.
This result is, in turn, a consequence of the duality between the RSK and Burge correspondences, and holds also in the discrete setting where the weights $X_{i,j}$ follow a geometric distribution.

The above results have an important consequence in the asymptotic analysis of the OSP.
As we will explain in the full version of this extended abstract, Conjecture~\ref{main-conj} implies that the total absorbing time of the OSP on $n$ particles is distributed as a so-called point-to-line LPP model studied in~\cite{bisiZygouras19b}, thus exhibiting fluctuations of order $n^{1/3}$ and GOE Tracy-Widom limiting distribution as $n\to\infty$. This solves, modulo Conjecture~\ref{main-conj}, an open problem posed in~\cite{angelHolroydRomik09}.

\subsection{A combinatorial reformulation of Conjecture~\ref{main-conj}}

The conjectural equality in distribution between $\bm{U}_n$ and $\bm{V}_n$ remains mysterious, but we made some progress towards understanding its meaning by reformulating it as an algebraic-combinatorial identity that is of independent interest.
\begin{conj} 
\label{main-conj-reformulated}
For $n\ge 2$ we have the identity of vector-valued generating functions
\begin{equation}
\label{eq:comb-iden}
\sum_{t\in \syt(\staircase_n)} f_t(x_1,\ldots,x_{n-1}) \sigma_t
= \sum_{s\in \sort_n} 
g_s(x_1,\ldots,x_{n-1}) \pi_s \, .
\end{equation}
\end{conj}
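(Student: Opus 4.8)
The plan is to prove Conjecture~\ref{main-conj-reformulated} by reading off each side of \eqref{eq:comb-iden} directly from the dynamics of the relevant stochastic process, so that the conjectural distributional identity $\bm{U}_n \overset{D}{=} \bm{V}_n$ of Conjecture~\ref{main-conj} becomes a finite, explicitly checkable combinatorial statement, and then to verify that statement for $2\le n\le6$.

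First I would describe the law of $\bm{V}_n$. Conditioning on the growth trajectory of the randomly growing staircase diagram --- which is recorded by a standard Young tableau $t\in\syt(\staircase_n)$ --- the successive box-addition times form a Markov chain whose increments are exponential with rates equal to the number of addable corners of the current shape, a quantity depending only on the shape and hence only on $t$. Projecting onto the $(n-1)$ anti-diagonal coordinates and recording the chamber of $\Rnonneg^{n-1}$ into which the resulting times fall --- equivalently, the relative order in which the boxes $(n-k,k)$ are filled, encoded by a vector $\sigma_t$ --- yields a density (or Laplace transform) of exactly the claimed form $\sum_t f_t(x_1,\dots,x_{n-1})\,\sigma_t$, with each $f_t$ an explicit rational function of the $x_i$ built from the shapes along $t$. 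In parallel I would describe the law of $\bm{U}_n$ from the OSP: conditioning on the time-ordered sequence of successful swaps --- which is precisely a sorting network $s\in\sort_n$, since the OSP reaches the reverse configuration $w_0=(n,n-1,\dots,1)$ through exactly $\binom{n}{2}$ inversion-increasing swaps --- the swap times again form an exponential-increment Markov chain, now with rates equal to the number of adjacent pairs currently eligible to swap; projecting onto the times of the last occurrence of each letter $1,\dots,n-1$ and recording the order chamber by a vector $\pi_s$ produces $\sum_s g_s(x_1,\dots,x_{n-1})\,\pi_s$. Once both laws are written in this form, Conjecture~\ref{main-conj} and Conjecture~\ref{main-conj-reformulated} are equivalent (chamber by chamber, using that the coordinates are a.s.\ distinct), and both refine the marginal identities of Theorem~\ref{thm:main-thm3}.

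With the reformulation in hand, Theorem~\ref{thm:main-thm2} follows from a finite computation: enumerate $\syt(\staircase_n)$ and $\sort_n$ --- whose common cardinality is $1, 2, 16, 768, 292{,}864$ for $n=2,\dots,6$ (Stanley's count of reduced words, realized bijectively by Edelman--Greene~\cite{edelmanGreene87}) --- assemble the finitely many coefficients $f_t, g_s$ and vectors $\sigma_t, \pi_s$, and check equality of the two vector-valued rational functions symbolically over a common denominator. The real difficulty is the general case. The Edelman--Greene correspondence already supplies a bijection $\sort_n\xrightarrow{\sim}\syt(\staircase_n)$, since $w_0$ admits a unique Edelman--Greene insertion tableau; yet this bijection does \emph{not} send $(g_s,\pi_s)$ to $(f_{\eg(s)},\sigma_{\eg(s)})$, so no evident bijection accounts for \eqref{eq:comb-iden}. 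A general proof would therefore require genuinely new input: a different bijection $\sort_n\to\syt(\staircase_n)$ compatible with the weights and the order chambers, or a sign-reversing involution regrouping the terms, or an algebraic mechanism --- for example realizing both sides as one specialization of the Stanley symmetric function $F_{w_0}$, or an intertwining of Markov chains coupling the corner growth process to the OSP. Since $\abs{\sort_n}$ grows super-exponentially, brute force cannot be pushed far past $n=6$, so locating this structural input is, I expect, the crux.
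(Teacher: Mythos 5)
Your proposal tracks the paper's own treatment almost exactly: Conjecture~\ref{main-conj-reformulated} is not proved in the paper either, and the paper (Section~\ref{sec:comb-iden}) does precisely what you describe --- condition on the discrete path through $\young(\staircase_n)$ or the Cayley graph of $S_n$, write the joint density of $\bm{V}_n$ (resp.\ $\bm{U}_n$) as a sum over $t\in\syt(\staircase_n)$ (resp.\ $s\in\sort_n$) of exponential-increment terms weighted by $\prod_j\deg_t(j)^{-1}$ (resp.\ $\prod_j\deg_s(j)^{-1}$), pass to Fourier/Laplace transforms using $\hat{E_\rho}(x)=\rho/(\rho+ix)$ to absorb the rate factors into the $1/(x_k+\deg)$ terms, and read off identity~\eqref{eq:comb-iden} chamber by chamber; the verification for $2\le n\le6$ is the paper's Theorem~\ref{thm:main-thm2}, done by symbolic computation on the same $1,2,16,768,292{,}864$ objects you list. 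You also correctly flag, as the paper does after Examples~\ref{ex:syt6}--\ref{ex:sortingnet6}, that Edelman--Greene preserves $(\fin_s,\pi_s)=(\diag_t,\sigma_t)$ but not the generating factors, so it does not directly yield the conjecture, and that a general proof remains open.
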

Precise definitions and examples will be given in Section~\ref{sec:comb-iden}, where we will prove the equivalence between Conjectures~\ref{main-conj} and~\ref{main-conj-reformulated}.
For the moment, we only remark that the sums on the left-hand and right-hand sides of~\eqref{eq:comb-iden} range over the sets of staircase shape standard Young tableaux $t$ and sorting networks $s$ of order~$n$, respectively; $f_t$ and $g_s$ are certain rational functions, and $\sigma_t$, $\pi_s\in \sym_{n-1}$ are permutations associated with $t$ and $s$. 

The identity \eqref{eq:comb-iden} reduces the proof of $\bm{U}_n \overset{D}{=} \bm{V}_n$ for fixed~$n$ to a concrete finite computation.
This enabled us to provide a computer-assisted verification of Conjecture~\ref{main-conj} for $4\le n\le 6$ (the cases $n=2,3$ can be checked by hand) and thus prove Theorem~\ref{thm:main-thm2}.

\section{Equidistribution of LPP times and dual LPP times along border strips}

\label{lpp}

The goal of this section is to prove Theorem~\ref{thm:main-thm1}.
We first fix some terminology.
We say that $(i,j)$ is a \firstmention{border box} of a Young diagram $\lambda$ if $(i+1,j+1)\notin \lambda$, or equivalently if $(i,j)$ is the last box of its diagonal.
We refer to the set of border boxes of $\lambda$ as the \emph{border strip} of $\lambda$.
We say that $(i,j)\in \lambda$ is a \emph{corner} of $\lambda$ if $\lambda\setminus\{(i,j)\}$ is a Young diagram.
Note that every corner is a border box.
We refer to any array $x=\{x_{i,j}\colon (i,j)\in\lambda\}$ of non-negative real numbers as a \emph{tableau} of shape $\lambda$.
We call $x$ an \emph{interlacing tableau} if its diagonals interlace, in the sense that
$x_{i-1,j} \leq x_{i,j}$ if $i>1$ and 
$x_{i,j-1} \leq x_{i,j}$ if $j>1$
for all $(i,j)\in\lambda$, or equivalently if its entries are weakly increasing along rows and columns.

Let now $X$ be a \emph{random} tableau of shape $\lambda$ with non-negative random entries $X_{i,j}$.
We can then define the associated LPP time $L(a,b;c,d)$ between two boxes $(a,b),(c,d)\in\lambda$ as in~\eqref{eq:lpp}.
We will mainly be interested in the special $\lambda$-shaped tableaux
$L = (L_{i,j})_{(i,j) \in \lambda}$ and $L^* = (L^*_{i,j})_{(i,j) \in \lambda}$, which we respectively call the \firstmention{LPP tableau} and the \firstmention{dual LPP tableau}, defined by
$$
L_{i,j} = L(1,1; i,j) \, , 
\qquad
L^*_{i,j} = L(i,1; 1,j) \, , \qquad \text{for } (i,j)\in \lambda \, .
$$
It is easy to see from the definitions that $L$ and $L^*$ are both (random) interlacing tableaux.

Now, if the weights are i.i.d., it is evident that, for each $(i,j)\in \lambda$, the distributions of $L_{i,j}$ and $L^*_{i,j}$ coincide.
Remarkably, if the common distribution of the weights is geometric or exponential, then a far stronger distributional identity holds:

\begin{thm}
\label{thm:UpDownLPP}
Let $X$ be a Young tableau of shape $\lambda$ with i.i.d.\ geometric or i.i.d.\ exponential weights.
Then the border strip entries (and in particular the corner entries) of the corresponding LPP and dual LPP tableaux $L$ and $L^*$ have the same joint distribution.
\end{thm}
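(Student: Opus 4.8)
The plan is to deduce Theorem~\ref{thm:UpDownLPP} from the classical RSK and Burge correspondences and the duality relating them. First I would recall the description of LPP times in terms of RSK: if one applies RSK to the $\lambda$-shaped matrix $X$ (read as a generalized permutation / two-line array, or in the exponential case via its continuous analogue, the O'Connell--Yor / local RSK dynamics), the entry $L_{i,j}=L(1,1;i,j)$ equals the $(1,1)$ entry of the pair of Young tableaux produced from the truncated array $X|_{[1,i]\times[1,j]}$; more precisely the whole interlacing tableau $L$ is encoded by the RSK output. Dually, the dual LPP times $L^*_{i,j}=L(i,1;1,j)$ are governed by \emph{anti-directed} paths, and these are exactly the quantities computed by the Burge correspondence (RSK with column insertion / the ``dual RSK'' reading the rows of $X$ in the reverse order in one coordinate) applied to the same array.

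The key algebraic input is the known duality between RSK and Burge: applying RSK to a matrix and applying Burge to the matrix with one axis reversed (equivalently, transposing the recording data appropriately) produce outputs that are related by a shape-transposing, distribution-preserving bijection when the entries are i.i.d. Since i.i.d.\ geometric (resp.\ exponential) weights form an exchangeable array under permuting rows or columns, reversing an axis does not change the joint law of $X$. Therefore the joint law of the family $\{L_{i,j}\}$ obtained from RSK equals the joint law of the family $\{L^*_{i,j}\}$ obtained from Burge --- but only for those entries that are ``read off'' in a compatible way. The border strip consists precisely of the boxes $(i,j)$ that are last on their diagonal, i.e.\ the boxes where a full $[1,i]\times[1,j]$ rectangle fits inside $\lambda$ and is maximal in the diagonal direction; these are exactly the entries for which the RSK/Burge bijection identifies $L$ and $L^*$ consistently across all of them simultaneously. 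So the steps are: (1) set up the combinatorial encoding of $L$ and $L^*$ via RSK and Burge on the (reversed) array; (2) invoke RSK--Burge duality to get a measure-preserving bijection on the weight arrays; (3) check that this bijection sends the border-strip entries of $L$ to the border-strip entries of $L^*$, entry by entry; (4) handle the exponential case either by the same argument on the continuous Burge dynamics or by taking a geometric-to-exponential scaling limit.

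The main obstacle, I expect, is step~(3): verifying that the RSK--Burge duality bijection respects the \emph{entire border strip jointly}, not merely each corner or each diagonal in isolation. The subtlety is that LPP times on different diagonals come from truncations of $X$ to different rectangles, and one must argue that a single application of the duality --- rather than separate applications for each rectangle --- simultaneously matches all the border-strip values; this requires the ``local'' or ``growth-diagram'' form of the correspondences, where the intermediate tableaux along the insertion process literally record all the $L_{i,j}$ at once, together with the compatibility of border boxes with nested rectangles. The exponential limit (step 4) is routine once the discrete statement is in hand.
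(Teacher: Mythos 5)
Your proposal correctly identifies the opening move --- encode $L$ via $\RSK$ and $L^*$ via $\Burge$ so that $L_{m,n}=\RSK(X)_{m,n}$ and $L^*_{m,n}=\Burge(X)_{m,n}$ for border boxes $(m,n)$ --- but the remainder diverges from the paper and contains a gap that you yourself flag as ``the main obstacle'' without resolving it. Your key input is a duality of the form ``$\RSK$ applied to $X$ equals $\Burge$ applied to $X$ with one axis reversed, up to a distribution-preserving bijection.'' This is fine for a single rectangle, and it recovers the trivial marginal identity $L_{m,n}\overset{D}{=}L^*_{m,n}$ box by box. But for a non-rectangular shape $\lambda$, reversing an axis does not map $\lambda$-shaped arrays to $\lambda$-shaped arrays, so there is no single axis-reversal that simultaneously handles all the rectangles $[1,m]\times[1,n]$ with $(m,n)$ ranging over the border strip. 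Step~(3) of your plan is therefore not a detail to be checked but the whole content of the theorem, and the axis-reversal route does not supply it.

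The paper's proof takes a genuinely different route that sidesteps this entirely. It treats $\RSK$ and $\Burge$ as two bijections $\tab_S(\lambda)\to\interlacingtab_S(\lambda)$ on the \emph{same} domain (Theorem~\ref{thm:RSK}) and proves the much stronger statement that $\RSK(X)\overset{D}{=}\Burge(X)$ as entire random interlacing tableaux (Lemma~\ref{lem:RSK=Burge}), not merely along the border strip. The mechanism has nothing to do with reversing axes: one observes from the $k=\min(m,n)$ case of the Greene-type characterization~\eqref{eq:RSK_Burge_Greene} that the rectangular sums $\sum_{i\le m,\,j\le n}x_{i,j}$, $(m,n)\in\Fro$, are recoverable from either output, hence there is a single integer linear functional $\sum_{(i,j)\in\lambda}\omega_{i,j}\,(\cdot)_{i,j}$ that both $\RSK$ and $\Burge$ carry to $\sum_{(i,j)\in\lambda}x_{i,j}$ (identity~\eqref{eq:sumTableau}). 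Since an i.i.d.\ geometric array has law $p^{|\lambda|}(1-p)^{\sum x_{i,j}}$ depending on $x$ only through that total sum, the preimages $\RSK^{-1}(t)$ and $\Burge^{-1}(t)$ of any interlacing tableau $t$ have the same probability, giving the lemma directly; the exponential case follows by scaling $p=\e^{-\epsilon\alpha}$, $\epsilon\downarrow 0$, as you anticipated. To repair your proposal you would need to replace the axis-reversal duality with this sum-preservation argument, or otherwise produce the joint, border-strip-respecting bijection that you correctly identify as missing.
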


Theorem~\ref{thm:main-thm1} immediately follows from Theorem~\ref{thm:UpDownLPP} applied to tableaux of staircase shape $(n-1,n-2,\dots,1)$, since in this case the coordinates of $\bm{V}_n$ and $\bm{W}_n$ are precisely the corner entries of $L$ and $L^*$, respectively.

We will sketch the proof of Theorem~\ref{thm:UpDownLPP} by using an extension of two celebrated combinatorial maps, the Robinson--Schensted--Knuth and Burge correspondences~\cite{fulton97}.
We denote by $\tab_{S}(\lambda)$ the set of tableaux of shape $\lambda$ with entries in $S\subseteq \Rnonneg$, and by $\interlacingtab_{S}(\lambda)$ the subset of interlacing tableaux.
Let $\Pi^{(k)}_{m,n}$ be the set of all unions of $k$ disjoint non-intersecting directed lattice paths $\pi_1, \dots, \pi_k$ with $\pi_i$ starting at $(1,i)$ and ending at $(m,n-k+i)$.
Similarly, let $\Pi^{*(k)}_{m,n}$ be the set of all unions of $k$ disjoint non-intersecting directed lattice paths $\pi_1,\dots,\pi_k$ with $\pi_i$ starting at $(m,i)$ and ending at $(1,n-k+i)$.
\begin{thm}[\cite{bisiOConnellZygouras20, greene74, krattenthaler06}]
\label{thm:RSK}
Let $\lambda$ be a Young diagram with border strip $\Fro$ and let $S$ be one of the sets $\mathbb{Z}_{\ge 0}$ or $\mathbb{R}_{\ge 0}$.
There exist two bijections 
\begin{align*}
\RSK \colon \tab_{S}(\lambda)&\to \interlacingtab_{S}(\lambda)\, ,
& x = \{x_{i,j} \colon (i,j)\in \lambda\}
&\xmapsto{\RSK}
r = \{r_{i,j} \colon (i,j)\in \lambda\} \, , \\
\Burge \colon\tab_{S}(\lambda)&\to \interlacingtab_{S}(\lambda)\, ,
& x = \{x_{i,j} \colon (i,j)\in \lambda\}
&\xmapsto{\,\Burge\,}
b = \{b_{i,j} \colon (i,j)\in \lambda\} \, ,
\end{align*}
called the Robinson--Schensted--Knuth and Burge correspondences, that are characterized (and in fact defined) by the following relations:
for any $(m,n)\in \Fro$ and $1\leq k\leq \min(m,n)$,
\begin{equation}
\label{eq:RSK_Burge_Greene}
\sum_{i=1}^k r_{m-i+1,n-i+1}
= \max_{\pi \in\Pi^{(k)}_{m,n}} \sum_{(i,j)\in \pi} x_{i,j} \, , \qquad
\sum_{i=1}^k b_{m-i+1,n-i+1}
= \max_{\pi \in\Pi^{*(k)}_{m,n}} \sum_{(i,j)\in \pi} x_{i,j} \, .
\end{equation}
\end{thm}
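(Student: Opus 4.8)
The plan is to build both correspondences cell by cell from a \emph{local} rule, and then to exploit that locality: the Greene-type identities~\eqref{eq:RSK_Burge_Greene} on a general Ferrers shape will be reduced to the classical ones on rectangles. Both $\RSK$ and $\Burge$ admit a local description --- a growth-diagram rule in the spirit of Fomin, or, equivalently, a sequence of local moves on the array as in~\cite{bisiOConnellZygouras20} --- under which the output entry $r_{i,j}$ is a fixed function of the input entries $x_{i',j'}$ with $i'\le i$ and $j'\le j$ only; the $\RSK$ and $\Burge$ rules differ merely by a tie-breaking convention, and each has a piecewise-linear (``max-plus'') version valid over $\R_{\ge 0}$ as well as over $\Z_{\ge 0}$. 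This rule is invertible step by step, and the conditions describing its image are precisely the interlacing inequalities, so $\RSK,\Burge\colon\tab_S(\lambda)\to\interlacingtab_S(\lambda)$ are bijections; moreover the cells of the Ferrers shape $\lambda$ may be visited in any order refining the coordinatewise partial order without changing the outcome. Finally, the relations~\eqref{eq:RSK_Burge_Greene} do \emph{characterize} $r$ given $x$: each box of $\lambda$ lies on a unique diagonal whose last box is a border box $(m,n)$, and along that diagonal two consecutive partial sums in~\eqref{eq:RSK_Burge_Greene} differ by exactly one entry, so the displayed sums determine all of $r$; by invertibility the same relations determine $x$ from $r$.

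Fix now a border box $(m,n)$ of $\lambda$. Since $\lambda$ is a Young diagram containing $(m,n)$, the rectangle $R=[1,m]\times[1,n]$ is contained in $\lambda$, and by locality $r|_R$ equals the classical $\RSK$ (resp.\ $\Burge$) image of the rectangular array $x|_R$. For a rectangular array the array form of either classical correspondence has the property that, if $\nu$ denotes the common shape of the associated pair of Young tableaux, then $r_{m-i+1,n-i+1}=\nu_i$ for $1\le i\le\min(m,n)$, whence $\sum_{i=1}^{k}r_{m-i+1,n-i+1}=\nu_1+\dots+\nu_k$ for each $k$.

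It remains to identify $\nu_1+\dots+\nu_k$ with the two path maxima, and this is Greene's theorem in its rectangular, weighted form: for $\RSK$ the first $k$ parts of the shape sum to the largest total weight of $k$ pairwise non-crossing \emph{down-right} lattice paths with the prescribed endpoints on the top and bottom edges, i.e.\ the maximum over $\Pi^{(k)}_{m,n}$; for $\Burge$ the same holds with non-crossing \emph{up-right} paths, i.e.\ the maximum over $\Pi^{*(k)}_{m,n}$ (the two cases correspond to increasing and to decreasing chains in the filling, and the second follows from the first by a reflection of the rectangle). These are classical~\cite{greene74,krattenthaler06}; combining them with the previous display gives~\eqref{eq:RSK_Burge_Greene}, and hence the theorem.

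The real work is hidden in the first step, at the level of generality needed here: setting up the local-rule (or local-move) description of $\RSK$ and, above all, of $\Burge$ on an arbitrary Ferrers shape rather than on a rectangle, and checking that the result is well defined and independent of the order in which the cells are visited, lies in the interlacing cone, and is globally invertible. This is exactly what~\cite{bisiOConnellZygouras20} provides; granting it, the remaining steps are a routine combination of locality with the classical Greene--Krattenthaler identities.
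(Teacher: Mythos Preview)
Your proposal is correct and matches the paper's own approach. The paper does not give a self-contained proof of this theorem: it cites \cite{bisiOConnellZygouras20, greene74, krattenthaler06}, remarks that the entries $r_{i,j}$ and $b_{i,j}$ ``encode the shapes of the pair of tableaux obtained by applying the classical RSK and Burge correspondences to certain rectangular subarrays of $x$'', and notes that for rectangular $\lambda$ the statement is essentially Greene's theorem --- which is precisely the locality-plus-Greene reduction you carry out.
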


The classical RSK and the Burge correspondences are known as bijections between non-negative integer matrices and  pairs of semistandard Young tableaux of the same shape.
Here we presented them, according to the construction in~\cite[\S~2]{bisiOConnellZygouras20}, in a somewhat untraditional way as bijections between tableaux and interlacing tableaux (with non-negative entries).
More details will be provided in the journal version of this extended abstract.
We only mention that the outputs $r_{i,j}$ and $b_{i,j}$ defined via~\eqref{eq:RSK_Burge_Greene} encode the shapes of the pair of tableaux obtained by applying the classical RSK and Burge correspondences to certain rectangular subarrays of $x$.
In particular, for a \emph{rectangular} Young diagram $\lambda$, the above result is essentially Greene's Theorem~\cite{greene74}.

In the extremal case $k=\min(m,n)$, both maxima in~\eqref{eq:RSK_Burge_Greene} equal $\sum_{i=1}^m \sum_{j=1}^n x_{i,j}$.
Moreover, the ``global'' sum $\sum_{(i,j)\in \lambda} x_{i,j}$ of the tableau $x$ can be expressed as a linear combination with integer coefficients of the ``rectangular'' sums $\sum_{i=1}^m \sum_{j=1}^n x_{i,j}$, $(m,n)\in\Fro$.
We thus deduce a crucial fact: for any shape $\lambda$ there exist integers $\{\omega_{i,j}\colon (i,j)\in \lambda\}$ such that
\begin{equation}
\label{eq:sumTableau}
\sum_{(i,j)\in \lambda} \omega_{i,j} r_{i,j}
= \sum_{(i,j)\in \lambda} x_{i,j}
= \sum_{(i,j)\in \lambda} \omega_{i,j} b_{i,j}
\end{equation}
for all $x\in\tab_{S}(\lambda)$, where $r:= \RSK(x)$ and $b:=\Burge(x)$.

\begin{lem}
\label{lem:RSK=Burge}
If $X$ is a random tableau of shape $\lambda$ with i.i.d.\ geometric or i.i.d.\ exponential entries, then
\begin{equation}
\RSK(X)\overset{D}{=} \Burge(X) \, .
\label{eq:RSK=Burge}
\end{equation}
\end{lem}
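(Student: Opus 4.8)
The plan is to identify the law of $\RSK(X)$ with that of $\Burge(X)$ by an explicit computation in the geometric case, and then to deduce the exponential case by a scaling limit. The only inputs needed beyond elementary probability are that $\RSK$ and $\Burge$ are bijections $\tab_S(\lambda)\to\interlacingtab_S(\lambda)$ (Theorem~\ref{thm:RSK}) and the invariance~\eqref{eq:sumTableau}; no finer properties of the two correspondences enter.

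First I would treat the geometric case. Fix $q\in(0,1)$ and suppose $\P(X_{i,j}=\ell)=(1-q)q^{\ell}$ for $\ell\in\Znonneg$, so that $\P(X=x)=(1-q)^{|\lambda|}q^{\sum_{(i,j)\in\lambda}x_{i,j}}$ for every $x\in\tab_{\Znonneg}(\lambda)$. Since $\RSK$ is a bijection onto $\interlacingtab_{\Znonneg}(\lambda)$, for each $r\in\interlacingtab_{\Znonneg}(\lambda)$, writing $x:=\RSK^{-1}(r)$ and using the left-hand equality of~\eqref{eq:sumTableau},
\[
\P\bigl(\RSK(X)=r\bigr)=\P(X=x)=(1-q)^{|\lambda|}q^{\sum_{(i,j)\in\lambda}x_{i,j}}=(1-q)^{|\lambda|}q^{\sum_{(i,j)\in\lambda}\omega_{i,j}\,r_{i,j}}.
\]
The right-hand side involves $\RSK$ only through a quantity that, by~\eqref{eq:sumTableau}, $\Burge$ shares with it; running the same computation with $\Burge$ and the \emph{same} weights $\omega_{i,j}$ produces the identical mass function on $\interlacingtab_{\Znonneg}(\lambda)$. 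Hence $\RSK(X)\overset{D}{=}\Burge(X)$ for geometric weights of any parameter $q\in(0,1)$.

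Next I would pass to the exponential case by letting $q\to1^{-}$. The structural point is that, by~\eqref{eq:RSK_Burge_Greene}, each entry of $\RSK(x)$ and of $\Burge(x)$ is a continuous function of $x\in\Rnonneg^{|\lambda|}$, positively homogeneous of degree $1$: indeed each partial sum $\sum_{i=1}^{k}r_{m-i+1,n-i+1}$ in~\eqref{eq:RSK_Burge_Greene} is a maximum of finitely many linear forms in the $x_{i,j}$, hence continuous and $1$-homogeneous, and an individual entry $r_{i,j}$ is obtained by telescoping two such partial sums, taken at the border box $(m,n)$ that ends the diagonal through $(i,j)$ for the two consecutive values of $k$ reaching $(i,j)$; the same holds for $b_{i,j}$. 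Now let $X^{(\epsilon)}$ have i.i.d.\ geometric entries with parameter $q=e^{-\epsilon}$; then $\epsilon X^{(\epsilon)}\overset{D}{\to}X$ as $\epsilon\to0^{+}$, where $X$ has i.i.d.\ rate-$1$ exponential entries. By homogeneity $\RSK\bigl(\epsilon X^{(\epsilon)}\bigr)=\epsilon\,\RSK\bigl(X^{(\epsilon)}\bigr)$, so by continuity of $\RSK$ and the continuous mapping theorem $\epsilon\,\RSK(X^{(\epsilon)})\overset{D}{\to}\RSK(X)$, and likewise $\epsilon\,\Burge(X^{(\epsilon)})\overset{D}{\to}\Burge(X)$. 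Since $\RSK(X^{(\epsilon)})\overset{D}{=}\Burge(X^{(\epsilon)})$ by the geometric case, the same holds after multiplying by $\epsilon$, and uniqueness of distributional limits gives $\RSK(X)\overset{D}{=}\Burge(X)$, proving~\eqref{eq:RSK=Burge}.

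The step I expect to require the most care is the structural claim just used: namely that, as $(m,n)$ ranges over the border strip and $k$ ranges over $1\le k\le\min(m,n)$, the relations~\eqref{eq:RSK_Burge_Greene} really do pin down each single entry $r_{i,j}$ (and $b_{i,j}$) as a difference of two of their left-hand sides. This is exactly where one uses that every box of $\lambda$ lies on a diagonal whose final box is a border box, so that the hook partial sums along that diagonal telescope. An alternative that bypasses the limiting argument is available if one imports from the local-rule construction of~\cite{bisiOConnellZygouras20} that $\RSK$ and $\Burge$ are piecewise-linear with unimodular linear pieces: then both carry Lebesgue measure on $\tab_{\Rnonneg}(\lambda)$ to Lebesgue measure on $\interlacingtab_{\Rnonneg}(\lambda)$, so that the pushforward of the rate-$1$ exponential density $\exp(-\sum_{(i,j)}x_{i,j})$ is $\exp(-\sum_{(i,j)}\omega_{i,j}r_{i,j})$ for both correspondences by~\eqref{eq:sumTableau}, giving the exponential case directly; establishing that unimodularity is the more delicate ingredient and is what I would defer to the full version.
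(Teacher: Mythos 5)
Your proof is correct and follows essentially the same route as the paper: the geometric case is handled identically via bijectivity of $\RSK$, $\Burge$ and the invariance~\eqref{eq:sumTableau}, and the exponential case is obtained by the same $q\to 1^{-}$ scaling limit; you merely flesh out the details of that limit (continuity and $1$-homogeneity of the maps via~\eqref{eq:RSK_Burge_Greene}, plus the continuous mapping theorem), which the paper leaves as a one-sentence remark.
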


\begin{proof}
Assume first that $X$ has i.i.d.\ geometric entries with parameter $p\in (0,1)$, i.e., that $\P(X_{i,j} = m) = p (1-p)^m$ for all $m\ge 0$.
Choose $S= \Znonneg$ in Theorem~\ref{thm:RSK}.
Fix a tableau $t\in\interlacingtab_{\Znonneg}(\lambda)$ and let $y:=\RSK^{-1}(t)$ and $z:=\Burge^{-1}(t)$.
It then follows from~\eqref{eq:sumTableau} that
\[
\begin{split}
\P(\RSK(X)=t)
= \P(X=y)
&= p^{\abs{\lambda}} (1-p)^{\sum_{(i,j)\in\lambda} y_{i,j}}
= p^{\abs{\lambda}} (1-p)^{\sum_{(i,j)\in\lambda} \omega_{i,j} t_{i,j}} \\
&= p^{\abs{\lambda}} (1-p)^{\sum_{(i,j)\in\lambda} z_{i,j}}
= \P(X=z)
= \P(\Burge(X)=t) \, .
\end{split}
\]
This proves that $\RSK(X)$ and $\Burge(X)$ are equal in distribution, as claimed, for geometric weights.
By scaling the parameter as $p=\e^{-\epsilon \alpha}$ and taking the limit $\epsilon \downarrow 0$, one obtains~\eqref{eq:RSK=Burge} in the case where $X$ has i.i.d.\ exponential entries of rate $\alpha$.
\end{proof}

\begin{proof}[Proof of Theorem~\ref{thm:UpDownLPP}]
Let $\Fro$ be the border strip of $\lambda$.
Using~\eqref{eq:RSK_Burge_Greene}
for $k=1$, we see that $L_{m,n} = \RSK(X)_{m,n}$ and $L^*_{m,n} = \Burge(X)_{m,n}$ for all $(m,n)\in\Fro$.
If the weights are i.i.d.\ geometric or exponential variables, then $\RSK(X)$ and $\Burge(X)$ are equal in distribution by Lemma~\ref{lem:RSK=Burge}.
It follows that the restrictions of the LPP and dual LPP tableaux to the border strip, namely $\RSK(X)|_\Fro=L|_\Fro$ and $\Burge(X)|_\Fro=L^*|_\Fro$, are also equal in distribution.
\end{proof}

\section{From Conjecture~\ref{main-conj} to a combinatorial identity}

\label{sec:comb-iden}

In this section we reformulate Conjecture~\ref{main-conj} by showing its equivalence to Conjecture~\ref{main-conj-reformulated}.
We start by discussing the two families of combinatorial objects and defining the relevant associated quantities appearing in identity~\eqref{eq:comb-iden}.

\subsection{Staircase shape Young tableaux}
\label{subsec:staircaseTableaux}
Let $\staircase_n$ denote the partition $(n-1,n-2,\ldots,1)$ of $N=n(n-1)/2$; as a Young diagram we will refer to $\staircase_n$ as the \firstmention{staircase shape of order~$n$}.
Let $\syt(\staircase_n)$ denote the set of standard Young tableaux of shape $\staircase_n$.
We associate with each $t\in\syt(\staircase_n)$ several parameters, which we denote by $\diag_t$, $\sigma_t$, $\deg_t$, and $f_t$.

First, we define $\diag_t := (t_{n-1,1}, t_{n-2,2}, \ldots, t_{1,n-1})$ to be the vector of corner entries of~$t$ read from bottom-left to top-right. 
Second, we define $\sigma_t\in \sym_{n-1}$ to be the permutation  encoding the ordering of the entries of $\diag_t$, so that $\diag_t(j) < \diag_t(k)$ if and only if $\sigma_t(j) < \sigma_t(k)$ for all $j,k$.
The vector $\diagsorted_t$ will denote the increasing rearrangement of $\diag_t$, in the sense that $\diagsorted_t(k):=\diag_t(\sigma_t^{-1}(k))$ for all~$k$.
For later convenience we adopt the notational convention that  $\diagsorted_t(0)=0$.

Recall that a tableau $t \in \syt(\staircase_n)$ encodes a growing sequence
\begin{equation}
\label{eq:diagseq}
\emptyset = \lambda^{(0)} \nearrow \lambda^{(1)}
\nearrow \lambda^{(2)} \nearrow \ldots \nearrow \lambda^{(N)}=
\staircase_n
\end{equation}
of Young diagrams that starts from the empty diagram, ends at $\staircase_n$, and such that each $\lambda^{(k)}$ is obtained from $\lambda^{(k-1)}$ by adding the box $(i,j)$ for which $t_{i,j}=k$.
We then define the vector $\deg_t = (\deg_t(0),\ldots,\deg_t(N-1))$, where $\deg_t(k)$ is the number of boxes $(i,j)\in \staircase_n \setminus \lambda^{(k)}$ such that $\lambda^{(k)} \cup \{(i,j)\}$ is a Young sub-diagram of $\staircase_n$.
We may interpret $\deg_t(k)$ as the out-degree of $\lambda^{(k)}$ regarded as a vertex of the directed graph $\young(\staircase_n)$ of Young diagrams contained in $\staircase_n$ (a sublattice of the \firstmention{Young graph}, or \firstmention{Young lattice}, $\mathcal{Y}$), with edges corresponding to the box-addition relation $\mu \nearrow \lambda$.
Note that, in a simple random walk on $\young(\staircase_n)$ that starts from the empty diagram $\emptyset$, the probability of  the sequence of diagrams \eqref{eq:diagseq} associated with the tableau $t$ is precisely
$\prod_{k=0}^{N-1} \deg_t(k)^{-1}$.

Finally, we define the \firstmention{generating factor} of~$t$ as the rational function
\begin{equation}
f_t(x_1,\ldots,x_{n-1}) :=
\prod_{k=1}^{n-1} \ \ \prod_{\diagsorted_t(k-1) < j \le \diagsorted_t(k)} \frac{1}{x_k + \deg_t(j)} \, .
\label{eq:f_t}
\end{equation}

\definecolor{colorone}{rgb}{0,0.2,0.8}
\definecolor{colortwo}{rgb}{1,0,0}
\definecolor{colorthree}{rgb}{0,0.4,0}
\definecolor{colorfour}{rgb}{0.5,0,0.5}
\definecolor{colorfive}{rgb}{0,0,0}

\begin{ex}
\label{ex:syt6}
For the tableau $t$ shown in Fig.~\ref{fig:syt-sn6} (left), we have 
\begin{align*}
\diag_t &= 
({{\color{colorone}10}, {\color{colorthree}13}, {\color{colorfive}15}, {\color{colorfour}14}, {\color{colortwo}11}}) \, ,
\quad
\sigma_t = 
({{\color{colorone}1}, {\color{colorthree}3}, {\color{colorfive}5}, {\color{colorfour}4}, {\color{colortwo}2}}) \, ,
\quad
\deg_t =
({{\color{colorone}1}, {\color{colorone}2}, {\color{colorone}2}, {\color{colorone}3}, {\color{colorone}3}, {\color{colorone}3}, {\color{colorone}4}, {\color{colorone}4}, {\color{colorone}4}, {\color{colorone}4}, {\color{colortwo}3}, {\color{colorthree}2}, {\color{colorthree}3}, {\color{colorfour}2}, {\color{colorfive}1}}) \, , \\
f_t &=
{\color{colorone}\frac{1}{(x_{1}+{1})(x_{1}+{2})^2 (x_{1}+{3})^3 (x_{1}+{4})^4}} \cdot
{\color{colortwo}\frac{1}{x_{2}+{3}}} \cdot
{\color{colorthree}\frac{1}{(x_{3}+{2})(x_{3}+{3})}} \cdot
{\color{colorfour}\frac{1}{x_{4}+{2}}} \cdot
{\color{colorfive}\frac{1}{x_{5}+{1}}} \, .
\end{align*}
Here, we have used colors to illustrate how the entries of $\diag_t$ determine a decomposition of $\deg_t$ into blocks, which correspond to different variables $x_k$ in the definition of the generating factor $f_t$.
\end{ex}

\setlength{\belowcaptionskip}{-5pt}
\begin{figure}[t!]
\centering
\centering
{\includegraphics[height=30mm]{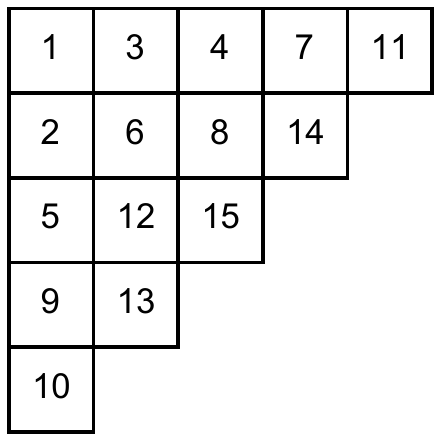}}
\hspace{0.3cm} \raisebox{40pt}{$\xmapsto{\ \eg\ }$} \hspace{0.3cm}
{\includegraphics[height=30mm]{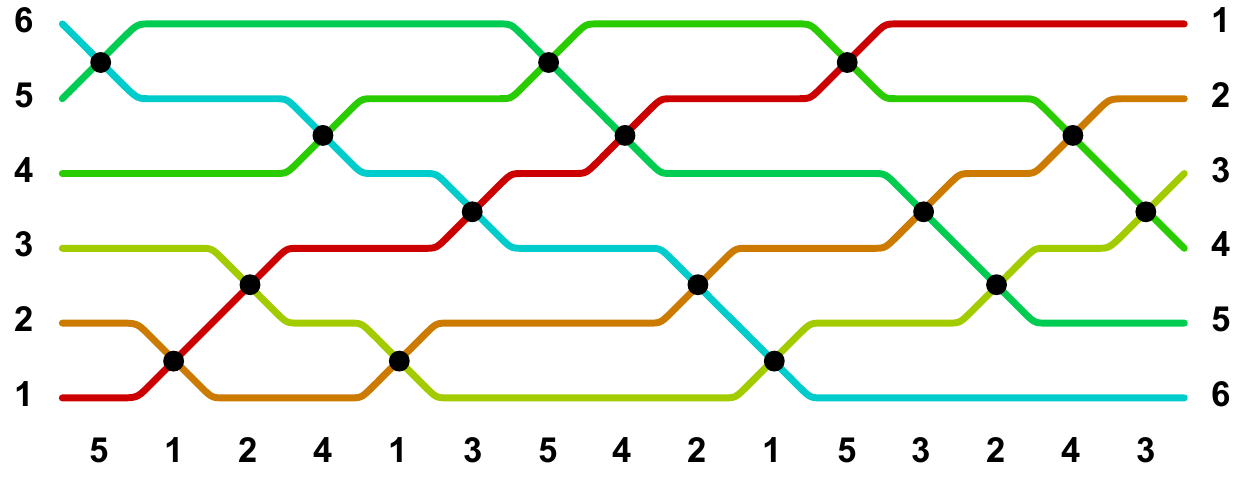}}
\caption{A staircase shape standard Young tableau $t$ of order~6, shown in ``English notation'', and the sorting network $s=\eg(t)$ of order~6 (illustrated graphically as a wiring diagram) associated with the former via the Edelman--Greene correspondence.}
\label{fig:syt-sn6}
\end{figure}

\subsection{Sorting networks}
\label{subsec:sortingNetworks}
Recall that a \firstmention{sorting network of order~$n$} is a synonym for a reduced word decomposition of the reverse permutation $\rev_n = (n,n-1,\ldots,1)$ in terms of the Coxeter generators $\tau_j = (j \ \ j+1)$, $1\le j< n $.
Formally, a sorting network is a sequence of indices $s=(s_1,\ldots,s_{N})$ of length $N=n(n-1)/2$, such that
$ 1\le s_j <n $ for all $j$ and 
$ \rev_n = \tau_{s_{N}} \cdots \tau_{s_2} \tau_{s_1}$.

We denote by $\sort_n$ the set of sorting networks of order~$n$.
The elements of $\sort_n$ can be interpreted as maximal length chains in the weak  Bruhat order or, equivalently, shortest paths in the poset lattice (which is the Cayley graph of $\sym_n$ with the adjacent transpositions as generators) connecting the identity permutation $\id_n$ to the permutation $\rev_n$.
They can be portrayed graphically using \firstmention{wiring diagrams}, as illustrated in Fig.~\ref{fig:syt-sn6}.

Stanley~\cite{stanley84} proved that sorting networks are equinumerous with staircase shape Young tableaux of the same order, i.e.\ $|\sort_n|=|\syt(\staircase_n)|$.
Edelman and Greene~\cite{edelmanGreene87} found an explicit bijection $\eg:\syt(\staircase_n)\to\sort_n$, known as the Edelman--Greene correspondence.

We associate with a sorting network $s \in \sort_n$ parameters $\fin_s$, $\pi_s$, $\deg_s$, and $g_s$ that will play a role analogous to the parameters $\diag_t$, $\sigma_t$, $\deg_t$, and $f_t$ for $t\in \syt(\staircase_n)$.

We define the vector $ \fin_s = (\fin_s(1),\fin_s(2),\ldots,\fin_s(n-1))$ by setting $\fin_s(k) := \max\{ 1\le j \le N\,:\, s_j = k \}$ to be the index of the last swap occurring between positions $k$ and $k+1$.
We define $\pi_s \in \sym_{n-1}$ to be the permutation encoding the ordering of the entries of $\fin_s$, so that $\fin_s(j) < \fin_s(k)$ if and only if $\pi_s(j) < \pi_s(k)$. 
We denote by $\finsorted_s$ the increasing rearrangement of $\fin_s$, and we use the notational convention $\finsorted_s(0)=0$.

We next define $\deg_s = (\deg_s(0),\ldots,\deg_s(N-1))$ to be the vector with coordinates
$\deg_s(k) := |\{ 1\le j\le n-1\colon \nu^{(k)}(j)<\nu^{(k)}(j+1)\}|$,
where $\nu^{(k)} := \tau_{s_k}\cdots \tau_{s_2}\tau_{s_1}$ is the $k$-th permutation in the path encoded by $s$.
Note that $\deg_s(k)$ is the out-degree of $\nu^{(k)}$ in the weak Bruhat order of $S_n$ considered as a directed graph, where edges are directed in the direction of increasing distance from $\id_n$.

Finally, the \firstmention{generating factor} $g_s$ of $s$ is defined, analogously to~\eqref{eq:f_t}, as the rational function
\begin{equation}
g_s(x_1,\ldots,x_{n-1}) = \prod_{k=1}^{n-1} \ \ \prod_{\finsorted_s(k-1) < j \le \finsorted_s(k)} \frac{1}{x_k + \deg_s(j)} \, .
\label{eq:g_s}
\end{equation}

\begin{ex} 
\label{ex:sortingnet6}
The sorting network $s = (5, 1, 2, 4, 1, 3, 5, 4, 2, 1, 5, 3, 2, 4, 3) \in \sort_6$ associated via the Edelman--Greene correspondence with the tableau $t$ from Example~\ref{ex:syt6} is shown in Fig.~\ref{fig:syt-sn6} (right).
Its parameters (shown using color coding as in Example~\ref{ex:syt6}) are
\begin{align*}
\fin_s &
= ({{\color{colorone}10}, {\color{colorthree}13}, {\color{colorfive}15}, {\color{colorfour}14}, {\color{colortwo}11}}),
\quad
\pi_s = ({{\color{colorone}1}, {\color{colorthree}3}, {\color{colorfive}5}, {\color{colorfour}4}, {\color{colortwo}2}}),
\quad
\deg_s
 = ({{\color{colorone}5}, {\color{colorone}4}, {\color{colorone}3}, {\color{colorone}3}, {\color{colorone}3}, {\color{colorone}2}, {\color{colorone}3}, {\color{colorone}2}, {\color{colorone}2}, {\color{colorone}3}, {\color{colortwo}2}, {\color{colorthree}1}, {\color{colorthree}2}, {\color{colorfour}1}, {\color{colorfive}1}}),
\\
g_s &= 
{\color{colorone}\frac{1}{(x_{1}+{5})(x_{1}+{4})(x_{1}+{3})^5(x_{1}+{2})^3}} \cdot
{\color{colortwo}\frac{1}{x_{2}+{2}}} \cdot
{\color{colorthree}\frac{1}{(x_{3}+{1})(x_{3}+{2})}} \cdot
{\color{colorfour}\frac{1}{x_{4}+{1}}} \cdot
{\color{colorfive}\frac{1}{x_{5}+{1}}} \, .
\end{align*}
\end{ex}

The following result is easy to guess from Examples~\ref{ex:syt6} and~\ref{ex:sortingnet6}.

\begin{lem}
\label{lem:edelman-greene-params}
If $t\in \syt_n$ and $s=\eg(t) \in \sort_n$ then we have that
\begin{equation}
\label{eq:edelman-greene-params}
\fin_s = \diag_t \, , \qquad\qquad
\pi_s = \sigma_t \, .
\end{equation}
\end{lem}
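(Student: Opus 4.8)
The plan is to use the Edelman--Greene correspondence in its original incarnation, via \emph{Edelman--Greene insertion}. Recall that $\eg$ may be described through its inverse: given $s=(s_1,\dots,s_N)\in\sort_n$, insert the letters $s_1,s_2,\dots,s_N$ one at a time into a growing increasing tableau; after all $N$ steps this produces a fixed insertion tableau together with the standard recording tableau $t$ (so that $s=\eg(t)$). By a theorem of Edelman and Greene the insertion tableau is always $P_0$, the tableau of shape $\staircase_n$ with $(P_0)_{i,j}=i+j-1$; moreover the reading word of every intermediate insertion tableau is a reduced word of an element of $\sym_n$, so all entries occurring in the procedure lie in $\{1,\dots,n-1\}$. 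Under this description $t_{i,j}$ is the step at which box $(i,j)$ is appended to the shape, so $\diag_t(k)=t_{n-k,k}$ is the step at which the corner box $(n-k,k)$ of $\staircase_n$ is created. First I would observe that, since $\diag_t$ and $\fin_s$ have pairwise distinct entries and $\sigma_t$ (resp.\ $\pi_s$) is by definition the permutation recording the relative order of those entries, the identity $\pi_s=\sigma_t$ is automatic once $\fin_s=\diag_t$ is known; so it suffices to prove that $\fin_s(k)=t_{n-k,k}$ for each $k$, i.e.\ that the corner box $(n-k,k)$ is created precisely during the \emph{last} insertion of the letter $k$.

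To show $\fin_s(k)\le t_{n-k,k}$ I would prove that once box $(n-k,k)$ has been created the letter $k$ is never inserted again. Put $m:=t_{n-k,k}$. Having just been created at step $m$, box $(n-k,k)$ holds a value that is $\le n-1$ and $\ge$ its final value $(P_0)_{n-k,k}=n-1$, hence $=n-1$; reading up column $k$ and using strict increase down columns together with the inequality ``current value $\ge(P_0)_{i,k}=i+k-1$'' one finds that, right after step $m$, column $k$ of the insertion tableau already equals column $k$ of $P_0$, namely $(k,k+1,\dots,n-1)$ read downwards. Since these are the final values, entries only decrease under later bumps, and column $k$ cannot grow past height $n-k$ inside $\staircase_n$, column $k$ is frozen from step $m$ on. Then an insertion of the letter $k$ at any later step would meet the frozen value $k$ in row $1$, the value $k+1$ in row $2$, and so on, triggering the degenerate Edelman--Greene move (row unchanged, value incremented and pushed down) at each of the rows $1,\dots,n-k$, and would finally push a value $n$ into row $n-k+1$ and append a box containing $n$ --- impossible, since all entries lie in $\{1,\dots,n-1\}$.

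For the opposite inequality $t_{n-k,k}\le\fin_s(k)$ I would show that box $(n-k,k)$ is actually created by an insertion of the letter $k$, i.e.\ $s_m=k$. The idea is to follow the bumping path of step $m$ backwards from the append in row $n-k$: that append places the value $n-1$, the path visits rows $1,\dots,n-k$ carrying a strictly increasing sequence of values, and, reasoning downward from row $n-k$ while pinning the column-$k$ entries in the visited rows to their $P_0$-values $n-2,n-3,\dots$ (again via the decrease-to-$P_0$ principle and the entry bound, together with a small case analysis of whether a given row-step is a genuine bump or a degenerate move), one concludes that the path runs down column $k$ and carries exactly $k,k+1,\dots,n-1$; in particular the letter inserted at step $m$ is $k$. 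Combining the two inequalities gives $\fin_s(k)=t_{n-k,k}=\diag_t(k)$ for every $k$, hence $\fin_s=\diag_t$ and $\pi_s=\sigma_t$.

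The step I expect to be the main obstacle is this last one, controlling the bumping path that creates the corner $(n-k,k)$: as the examples in Section~\ref{sec:comb-iden} illustrate, the bumping path of a non-final insertion need not be ``straight'' and intermediate insertion tableaux may develop transient gaps in their rows, so turning the heuristic ``the path runs down column $k$'' into a proof requires the careful (if ultimately routine) bookkeeping indicated above. An alternative would be to phrase the argument in terms of reverse Edelman--Greene insertion applied to the successive removal of the corners of $\staircase_n$ from $P_0$ --- the part of $P_0$ near each corner being completely rigid --- although the degenerate cases of the reverse bumping rule would then need equally careful treatment.
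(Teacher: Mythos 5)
Your approach is genuinely different from the paper's. The paper proves the lemma in three lines by invoking the description of the Edelman--Greene correspondence via the Sch\"utzenberger/promotion operator $\Phi$ (citing the notation of \cite[\S~4]{angelHolroydRomikVirag07}): writing $s_m = j_{\max}(\Phi^{N-m}(t))$, they observe that the smallest $r$ with $j_{\max}(\Phi^r(t))=k$ is $N-t_{n-k,k}$, and the result falls out by substitution. You instead work directly with Edelman--Greene \emph{insertion}, proving two inequalities by analyzing how the insertion tableau rigidifies to $P_0$ near column $k$. Your argument is more self-contained (it uses only the insertion rule and the ``entries only decrease to $P_0$'' principle, rather than relying on the promotion description of $\eg$), but it is correspondingly longer.

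Both directions of your argument appear sound, with one inaccuracy in the second. For $\fin_s(k)\le t_{n-k,k}$: your freezing argument for column $k$ is correct, and the cascade is forced as you say --- at each row $i\le n-k$ the inserted value $k+i-1$ sits at the frozen cell $(i,k)$, so only the degenerate move can avoid a repeated entry, forcing $(i,k+1)=k+i$ and pushing $k+i$ into row $i+1$; at row $n-k$ there is no cell $(n-k,k+1)$ in $\staircase_n$, so the insertion cannot terminate validly, contradicting that $s$ is a reduced word. For $t_{n-k,k}\le\fin_s(k)$, the right conclusion does follow by the downward induction you gesture at: with $v_i$ the value inserted into row $i$ and $v_{n-k}=n-1$, in a degenerate step one has $v_i=v_{i+1}-1$ immediately, while in a bump step the constraints $c_i\ge c_{n-k}=k$, the end-of-step value at $(i,c_i)$ being $v_i$, the frozen value $(i,k)=i+k-1$, and $v_i<v_{i+1}$ force $c_i=k$ and $v_i=i+k-1$. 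However, your stated heuristic that ``the path runs down column $k$'' is actually false: in degenerate steps the bumping path visits column $k+1$, not $k$. For instance with $n=4$, $s=(1,2,1,3,2,1)$, $k=2$, the fifth insertion (creating box $(2,2)$) has path through $(1,3)$ then $(2,2)$, not down column $2$, even though the carried values are still $2,3$. So the ``small case analysis'' you mention is genuinely necessary, and the conclusion should be phrased in terms of the carried \emph{values} being $k,k+1,\dots,n-1$ rather than the path columns all equalling $k$. Your alternative suggestion of arguing via reverse insertion and successive corner removal from $P_0$ is, in effect, the promotion-based description, and would converge with the paper's proof.
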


\begin{proof}
The second relation follows trivially from the first.
The first relation is an easy consequence of the definition of the Edelman--Greene correspondence, and specifically of the way the map $\eg$ can be visualized as ``emptying'' the tableau $t$ by repeatedly applying the Sch\"utzenberger operator. In the notation of  \cite[\S~4]{angelHolroydRomikVirag07}, we have:
\begin{align*}
\fin_s(k) 
&= \max \{ 1\le m\le N \,:\, j_\textrm{max}(\Phi^{N-m}(t)) = k \} \\
&= N - \min \{ 0\le r<N\,:\, j_\textrm{max}(\Phi^r(t)) = k \} \\
&= N- (N- t_{n-k,k}) = t_{n-k,k} = \diag_t(k) \, .
\qedhere
\end{align*}
\end{proof}

\vspace{-15pt}

\subsection{The combinatorial identity}
\label{subsec:combinIdentity}

Let $\freevecspace_{n-1}$ denote the free vector space generated by the elements of $\sym_{n-1}$ over the field of rational functions $\C_\mathbf{x}^{n-1}:=\C(x_1,\ldots,x_{n-1})$.
Define the following generating functions as elements of $\freevecspace_{n-1}$:
\begin{align}
F_n(x_1,\ldots,x_{n-1}) &:=\!\!\! \sum_{t\in \syt(\staircase_n)} f_t(x_1,\ldots,x_{n-1}) \sigma_t \, ,
\label{eq:genfun-F}
\\
G_n(x_1,\ldots,x_{n-1}) &:= \sum_{s\in \sort_n} 
g_s(x_1,\ldots,x_{n-1}) \pi_s \, .
\label{eq:genfun-G}
\end{align}
Conjecture~\ref{main-conj-reformulated} is the identity $F_n(x_1,\ldots,x_{n-1}) = G_n(x_1,\ldots,x_{n-1})$ (an equality of vectors with $(n-1)!$ components).
Note that in general it is \emph{not} true that $f_t = g_s$ if $s=\eg(t)$, as Examples~\ref{ex:syt6} and~\ref{ex:sortingnet6} clearly show. Thus, the Edelman--Greene correspondence does not seem to imply the conjecture in an obvious way. However, using~\eqref{eq:edelman-greene-params} we see that the  correspondence does imply the limiting case $\lim\limits_{x \to\infty} x^N (F_n(x,\ldots,x)-G_n(x,\dots,x))=0$.

The calculation of $F_n(x_1,\ldots,x_{n-1})$ and $G_n(x_1,\ldots,x_{n-1})$ involves a summation over $|\syt(\staircase_n)|=|\sort_n|=N!/(1^{n-1}\cdot3^{n-2}\cdots(2n-3)^1)$ elements. For $n\leq 6$ this calculation is feasible by using symbolic algebra software.
We wrote code in Mathematica---downloadable as a companion package \cite{orientedswaps-mathematica} to this extended abstract---to perform this calculation and check that the two functions are equal, thus 
proving Theorem~\ref{thm:main-thm2}.

\begin{figure}[t!]
\centering
{\includegraphics[width=1\columnwidth]{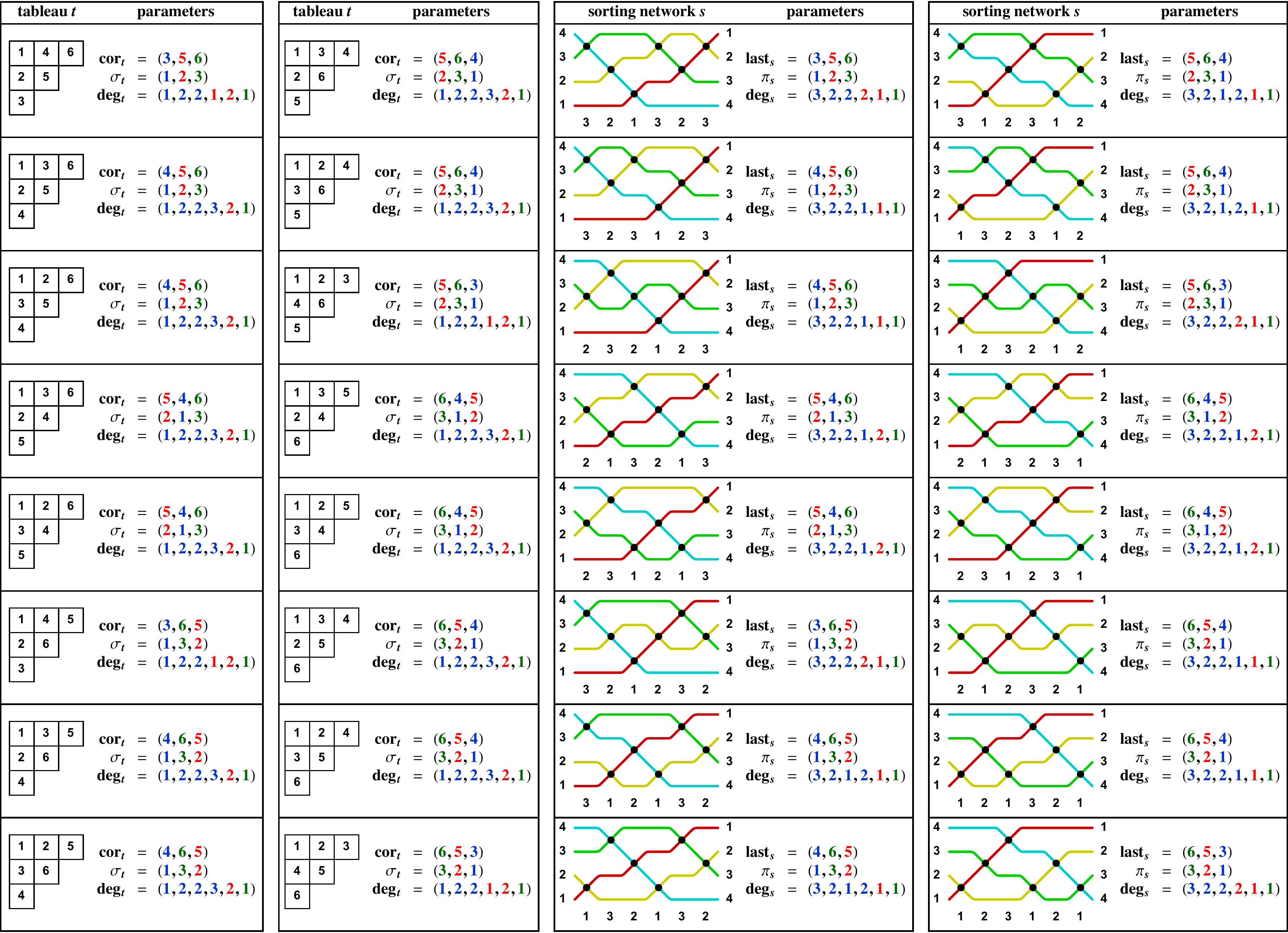}}
\caption{The $16$ staircase shape standard Young tableaux and sorting networks of order $4$ (ordered so that entries in the same relative positions in the two tables correspond to each others via the Edelman--Greene correspondence). As in Examples~\ref{ex:syt6}--\ref{ex:sortingnet6}, the coloring of the parameter entries emphasizes how different entries of $\deg_t$ and $\deg_s$ correspond to different factors in the definition of the generating factors $f_t$ and $g_s$.
}
\label{fig:syt-sn4}
\end{figure}

\begin{ex}
For $n=4$, the generating functions can be computed by hand using the tables shown in Fig.~\ref{fig:syt-sn4} above.
For example, 
the component of the two generating functions associated with the identity permutation $\operatorname{id}=(1,2,3)$ is
\[
\big(F_4(x_1,x_2,x_3)\big)_{\operatorname{id}} \!=\! \big(G_4(x_1,x_2,x_3)\big)_{\operatorname{id}} \!=\! 
\frac{x_1 + 2x_2 + 5}{(x_1+1) (x_1+2)^2 (x_1+3) (x_2+1) (x_2+2) (x_3+1)} \, .
\]
\end{ex}

\subsection{Equivalence of the combinatorial and probabilistic conjectures}
\label{subsec:equivalenceConjectures}

We now sketch the proof of the equivalence between Conjectures~\ref{main-conj} and~\ref{main-conj-reformulated}.
A key insight is that we can write explicit formulas for the density functions of $\bm{V}_n$ and $\bm{U}_n$ by interpreting both the randomly growing Young diagram model and the oriented swap process as continuous-time random walks on the directed graph $\young(\staircase_n)$ of Young sub-diagrams of $\staircase_n$, and the Cayley graph of $S_n$ with Coxeter generators, respectively. 
Specifically, we write the density function of $\bm{V}_n$ (resp.\ $\bm{U}_n$) as a weighted average of the conditional densities conditioned on the path that the process takes to get from the initial state $\emptyset$ (resp.\  $\id_n$) to the final state $\staircase_n$ (resp.\  $\rev_n$), that is,
\begin{align*}
p_{\bm{V}_n}(v_1,\ldots,v_{n-1})&= \sum_{t \in \syt(\staircase_n)} \P(T=t) \, p_{\bm{V}_n|T=t}(v_1,\ldots,v_{n-1}) \, ,
\\
 p_{\bm{U}_n}(u_1,\ldots,u_{n-1}) &= \sum_{s \in \sort_n} \P(S=s)  \, p_{\bm{U}_n|S=s}(u_1,\ldots,u_{n-1}) \, .
\end{align*}
Here, $t$ (resp.\ $s$) can be viewed as a realization of a (discrete-time) simple random walk $T$ (resp.\  $S$) on the directed graph $\young(\staircase_n)$ (resp. on the Cayley graph of $S_n$); therefore, $\P(\nobreak{T=t}) = \prod_{j} \deg_t(j)^{-1}$ and  $\P(S=s)=\prod_{j} \deg_s(j)^{-1}$.
Conditioned on this combinatorial path, the continuous time processes are a time-reparametrization of the discrete-time random walks. In fact, it is not hard to show that the conditional densities $p_{\bm{V}_n|T=t}$ and $p_{\bm{U}_n|S=s}$  are completely determined by the vectors  $\diagsorted_t$ and $\finsorted_s$ and their relative ordering $\sigma_t$ and $\pi_s$ in the simple random walks, and the sequences of out-degrees $\deg_t$ and $\deg_s$ along the paths (which correspond to the exponential clock rates to leave each vertex in the graph where the random walk is taking place).
The formulas for  the density functions of $\bm{U}_n$ and of $\bm{V}_n$ thus take the form
\newcommand{\convolutionop}{\mathop{\mathlarger{\mathlarger{\mathlarger{*}}}}}
\begin{align}
p_{\bm{V}_n}(v_1,\ldots,v_{n-1})
&= 
\!\!\!\sum_{t \in \syt(\staircase_n)} 
\frac{1}{\displaystyle\prod_{j=0}^{N-1} \deg_t(j) }\displaystyle
\prod_{k=1}^{n-1}\left(\convolutionop_{j=\diagsorted_t(k-1)+1}^{\diagsorted_t(k)} 
E_{\deg_t(j)} \right) \left(v_{\sigma_t^{-1}(k)}-v_{\sigma_t^{-1}(k-1)}\right) ,\nonumber
\\
p_{\bm{U}_n}(u_1,\ldots,u_{n-1})
&= 
\sum_{s \in \sort_n} \frac{1}{\displaystyle\prod_{j=0}^{N-1} \deg_s(j)}
\prod_{k=1}^{n-1}  \left(\convolutionop_{j=\finsorted_s(k-1)+1}^{\finsorted_s(k)} 
E_{\deg_s(j)}\right) \left(u_{\pi_s^{-1}(k)}-u_{\pi_s^{-1}(k-1)}\right) ,\nonumber
\label{eq:joint-density-un_vn}
\end{align}
where the notation $\convolutionop\limits_{j=1}^m f_j$ is a shorthand for the convolution $f_1 * \ldots * f_m$ of one-dimensional densities; and $E_\rho(u)= \rho e^{-\rho u} \1_{[0,\infty)}(u)$ is the exponential density with parameter $\rho>0$. 

Now, Conjecture~\ref{main-conj} can be viewed as claiming the equality $p_{\bm{U}_n} = p_{\bm{V}_n}$ of the joint density functions of $\bm{U}_n$ and $\bm{V}_n$, or equivalently the equality $\hat{p_{\bm{U}_n}} = \hat{p_{\bm{V}_n}}$ of the corresponding Fourier transforms.
In turn, the latter can be manipulated and recast as the combinatorial identity $F_n=G_n$ of Conjecture~\ref{main-conj-reformulated}, using the fact that the Fourier transform of the exponential density is $\hat{E_\rho}(x)=\rho/(\rho+ix)$.

\printbibliography

\end{document}